\theoremstyle{plain}% Theorem-like structures provided by amsthm.sty
\newtheorem{theorem}{Theorem}[section]
\newtheorem{lemma}[theorem]{Lemma}
\newtheorem{corollary}[theorem]{Corollary}
\newtheorem{proposition}[theorem]{Proposition}
\theoremstyle{definition}
\newtheorem{definition}[theorem]{Definition}
\theoremstyle{remark}
\def\R{{\mathbb R}}
\def\Z{{\mathbb Z}}
\def\N{{\mathbb N}}
\def\S{{\mathbb{S}}}
\def\C{{\mathbb C}}
\def\M{{\mathbb M}}
\def\l({\left(}
\def\r){\right)}
\begin{document}

\title{Strictly positive definite non-isotropic kernels on two-point homogeneous manifolds: The asymptotic approach }

\author[1]{J.~C. Guella \thanks{jcguella@unicamp.br}}
\author[2]{Janin J\"ager \thanks{janin.jaeger@math.uni-giessen.de}}
\affil[1]{Unicamp, Institute of Mathematics, Statistics and Scientific Computing, Campinas, Brazil\\
}
\affil[2]{Justus-Liebig University, Lehrstuhl Numerische Mathematik, Giessen, Germany}

\date{}
\maketitle

	\begin{abstract}
		We present  sufficient condition for a family of positive definite kernels on a compact two-point homogeneous space to be strictly positive definite based on  their representation as a series of spherical harmonics. The family analyzed is a generalization of the isotropic kernels and the case of a real sphere is analyzed in details.    
	\end{abstract}
	
{\bf Keywords:} Strictly positive definite kernels; Non-isotropic kernels; Two point compact homogeneous spaces.

{\bf MSC2020} 33C45;42A82;42C10; 43A90

	\section{Introduction}
	During the last five years, there has been a tremendous number of
	publications stating new results on positive definite kernels on
	spheres, see for example  \cite{ Gneiting2013, Hubbert2015} and reference therein  and a smaller number studying other manifolds \cite{Barbosa2016, Barbosa2017, Guella2017, Guella2016a,Guella2016b} including two-point homogeneous manifolds, tori and products of these. Most of the results focus on isotropic
	positive definite kernels, which are kernels that only depend on the
	geodesic distance of their arguments.  Isotropic kernels  are used in
	approximation theory, where they are often referred to as spherical or 
	radial basis functions \cite{Hubbert2015}  and are for example applied in
	geostatistics \cite{ Fornberg2015}. They
	are also of importance in statistics where they occur as correlation
	functions of Gaussian random fields  \cite{Lang2015}. 
	
	There are few results on kernel  non-isotropic kernels among them the axially-symmetric kernels discussed in  \cite{Emery2019}. 
	
	This publication will characterize  (strictly) positive definite kernels with a specific series representation, which are not necessary isotropic but include the isotropic kernels as  special case. We prove the result for all two-point homogeneous manifolds but study the specific implications in detail for the case of the $d$-dimensional sphere.
	
	We will briefly summarize necessary definitions in the first section
	and prove the abstract result for kernels on two-point homogeneous spaces in the second Section. For the $d$-sphere we derive explicit conditions for the strict positive definiteness of convolutional kernels and in the process prove a new estimate for the absolute value of spherical harmonics, these result are given in Section 3.

	\subsection{Definitions and notation}
	We assume that the manifold $\M$ is two-point homogeneous. Therefore it is isomorphic to one of the following five cases as proven in \cite{Wang1952},
	\begin{align*}
		\M=\mathbb{S}^{d-1},\quad \M=P^{d-1}(\R),\quad
		\M=P^{d-1}(\C),\quad \M=P^{d-1}(H),\quad
		\M=P^{16}(Cay).
	\end{align*}
	
	From  \cite{Evarist1975} we take the following well established results. There  exists an orthonormal base of $L^2(\M)$ such that each function in the basis $f_{j,k}$ is smooth and 
	$$\triangle f_{k,j}=\lambda_{k}f_{k,j}, \quad k\in\N,\ j=1,\ldots,m_k$$
	where 
	$$0=\lambda_1<\lambda_2< \lambda_3< \ldots< \lambda_{\ell} \leq,\ldots,\quad \underset{\ell\rightarrow\infty}{\lim}\lambda_{\ell}=+\infty, $$
	are the distinct eigenvalues of the Laplace-Beltrami operator on $\M$, denoted by $\triangle$, and $m_k$ is the dimension of the eigenspace $H_k$ corresponding to $\lambda_k$.

	The metric in $\M$ is  $d(\xi,\zeta)= \arccos \langle\xi, \zeta \rangle$ when $\M$ is a sphere, otherwise  $d(\xi,\zeta)= 2\arccos|\langle \tilde{\xi}/ |\tilde{\xi}|, \tilde{\zeta}/|\tilde{\zeta}| \rangle|$, where $\tilde{\xi}$, $\tilde{\zeta}$ are arbitrary class representatives. The famous addition formula reads
	\begin{equation}\label{eqSummationFor}
		\sum_{j=1}^{m_k}f_{k,j}(\xi)\overline{f_{k,j}(\zeta)}=c_{k}P_{ k}^{(\alpha,\beta)}\left(\cos\left(d(\xi,\zeta)\right)\right),\quad \xi,\zeta \in \M,
	\end{equation}
	where
	\begin{equation*}c_k=\frac{\Gamma(\beta+1)(2k+\alpha+\beta+1)\Gamma(k+\alpha+\beta+1)}{\Gamma(\alpha+\beta+2)\Gamma(k+\beta+1)}
	\end{equation*}
	and throughout $P_k^{(\alpha,\beta)}$, denotes the Jacobi polynomials normalized by
	\begin{equation}
		P_k^{(\alpha,\beta)}(1)=\frac{\Gamma(k+\alpha+1)}{\Gamma(k+1)\Gamma(\alpha+1)}.
	\end{equation}
	The coefficients satisfy $\alpha=\frac{d-3}{2}$, $\beta$ takes one of the values $(d-3)/2,\,-1/2,\,0,\,1,\,3$, in the order of the five  manifolds being studied.
	
	%We focus on interpolation problems on the $\M$ 
	% where a finite set of distinct data sites $\Xi\subset \mathbb{S}^d$ and  values $f\left(\xi\right) \in \C$, $\xi \in \Xi$, of a possibly elsewhere unknown function $f$ on the sphere are given.
	
	%The approximant is formed as a linear combination of kernels \[K:\M \times \M \rightarrow \C.\] Taking the form 
	%\begin{equation}\label{eq:Interpolant}  s_f\left(\zeta\right)=\sum_{\xi\in \Xi} c_{\xi} K\left(\zeta, \xi\right), \qquad  \zeta \in \M,
	%\end{equation}
	%the  problem of finding such an approximant $s_f$ satisfying
	% \begin{equation}
		% s_f\left(\xi\right)=f\left(\xi\right), \qquad \forall \xi \in \Xi,
		% \end{equation}
	% is uniquely solvable under certain conditions on $K$. We assume all the kernels to be Hermitian, meaning they satisfy $K\left(\xi,\zeta\right)=\overline{K\left(\zeta, \xi\right)}$, so that the positive definiteness of the kernel will ensure the solvability of the interpolation problem for arbitrary data sets.
	
	\begin{definition}\label{DF:SPD}
		A Hermitian kernel $K: \M \times \M \rightarrow \C$ is called {\/\rm
			positive definite on} $\M$ if  the matrix  $K_{\Xi}=\left\lbrace
		K\left(\xi,\zeta\right)\right\rbrace_{\xi,\zeta \in \Xi}$ is positive semi-definite on $\C^{\vert \Xi\vert}$ for arbitrary finite sets of distinct points $\Xi\subset \M$.\\
		The kernel is {\/\rm strictly positive definite} if $K_{\Xi}$ is a positive definite matrix on $\C^{\vert \Xi\vert}$  for  arbitrary finite sets of distinct points $\Xi$.
	\end{definition} 
	
	For this paper we focus on kernels possessing a series representation 
	\begin{equation}\label{eqKernHomConv}
		K(\xi,\zeta)=\sum_{k=0}^{\infty}\sum_{j=1}^{m_k}d_{j,k}f_{j,k}(\xi)\overline{f_{j,k}(\zeta)},\quad  d_{j,k}\in\C,
	\end{equation}
	with the prior defined basis. These include the isotropic kernels having the form 
	\begin{equation}\label{eqKernHomRad}
		K(\xi,\zeta)=\sum_{k=0}^{\infty}b_{k}\sum_{j=1}^{m_k}f_{j,k}(\xi)\overline{f_{j,k}(\zeta)} ,\quad  b_{k}\in\R,
	\end{equation}
	but also non-isotropic kernels.
	%The kernels of convolutional of \eqref{eq:Conkerform} satisfy
	%\begin{equation}
	%\int_{\M} K(\xi,\zeta)\overline{S(\zeta)}d\mu(\zeta)\in H_k,\quad  \text{ for all} S\in H_k
	%\end{equation}
	%and all $k\in \N$. These convolutional kernels can be represented in a series expansion of the form \ref{eq:Conkerform} for a specific basis of the eigenfunctions of the Laplace Beltrami operator. To be specific such a continuous and Hermitian kernel has a representation of the form
	%\begin{equation}\label{eqKernTwopointConv}
	%K( \xi,\zeta)=\sum_{k=0}^{\infty}\sum_{j=1}^{m_k}d_{j,k}S_{j,k}(\xi)\overline{S_{j,k}(\zeta)},\quad  d_{j,k}\in\R
	%\end{equation}
	%From \cite{Dyn1999} we can deduce that such a kernel is positive definite if and only if $d_{j,k}\geq0$.
	%We will now derive sufficient conditions for the strict positive definiteness of the kernel by employing \Cref{THMSerKerSPD} and the results existing for isotropic kernels.
	
	%For a kernel of the \cref{eqKernTwopointConv} we define two subset of $\N$:
	%\begin{align*}
	%\mathcal{J}:=\left\lbrace k\in \N:\ \exists j\in \lbrace 1,\ldots,m_k\rbrace \text{ with }d_{k,j}\neq 0\right\rbrace,\\
	%\mathcal{F}:=\left\lbrace k\in \N:\ \forall j\in \lbrace 1,\ldots,m_k\rbrace \text{ with }d_{k,j}> 0\right \rbrace.
	%\end{align*}
	
	\section{Characterizing strict positive definiteness}
	We note that a kernel of the form \eqref{eqKernHomConv} is positive definite on $\M$ if and only if $d_{j,k}\geq0$ for all $k\in \N, j=1,\ldots, m_k$ as proven in  \cite{Narcowich1995} Theorem $2.1$.
	We define for any such kernel the set of positive coefficients as 
	$$
	\mathcal{F}:=\lbrace (j,k) :\ d_{j,k}>0\rbrace
	$$
	and additionally the sets
	$$
	A_k:=\lbrace j: (j,k)\in \mathcal{F} \rbrace, \quad N=\lbrace k: k\in \Z_+ \wedge \exists d_{j,k} \neq 0 \rbrace.
	$$

	\begin{theorem}\label{LEEquivspdGinv}
		For a  continuous positive definite kernel of the form \eqref{eqKernHomConv} the following properties are equivalent:
		\begin{enumerate}
			\item $K$ is strictly positive definite on $\M$.
			\item For any finite set of distinct points $\Xi$, $\sum_{\xi \in \Xi}c_{\xi} f_{j,k}(\xi)= 0$, for all $(j,k) \in \mathcal{F}$ implies $c_{\xi}=0$ for all $\xi \in \Xi$.
			\item For any finite set of distinct points $\Xi\subset\M$, $$ \sum_{\xi\in \Xi }c_{\xi} \sum_{j\in A_k}f_{j,k}(\zeta)\overline{f_{j,k}(\zeta)}=0,\ , \forall k \in N,\zeta \in \M,$$  implies $c_{\xi }=0,\ \forall \xi \in \Xi$.
		\end{enumerate}
	\end{theorem}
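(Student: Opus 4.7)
The plan is to first handle (1)$\Leftrightarrow$(2) by diagonalizing the quadratic form of $K_\Xi$ against the eigenfunction basis, and then (2)$\Leftrightarrow$(3) by using the linear independence of $\{f_{j,k}\}_{j=1}^{m_k}$ inside each eigenspace $H_k$.

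For (1)$\Leftrightarrow$(2), I would plug \eqref{eqKernHomConv} into the quadratic form associated with $K_\Xi$ and any $c=(c_\xi)_{\xi\in\Xi}\in\C^{|\Xi|}$. Interchanging the (pointwise finite outer) sum over $\Xi\times\Xi$ with the series — which is permissible because $K$ is assumed continuous and $d_{j,k}\ge 0$, so each individual nonnegative term is bounded by $c^{*}K_{\Xi}c$ via monotone convergence on partial sums — yields
\begin{equation*}
c^{*}K_{\Xi}c \;=\; \sum_{k\in N}\sum_{j\in A_k} d_{j,k}\Bigl|\sum_{\xi\in\Xi}c_\xi f_{j,k}(\xi)\Bigr|^{2}.
\end{equation*}
Since every $d_{j,k}>0$ for $(j,k)\in\mathcal{F}$, this is a sum of strictly positively weighted squares over $\mathcal{F}$, hence vanishes iff $\sum_{\xi}c_\xi f_{j,k}(\xi)=0$ for every $(j,k)\in\mathcal{F}$. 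Thus $K_\Xi$ is singular iff there exists a nontrivial $c$ for which (2) fails on $\Xi$; equivalently, $K$ is strictly positive definite on $\M$ iff (2) holds for every finite $\Xi\subset\M$.

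For (2)$\Leftrightarrow$(3), fix $\Xi$ and $c=(c_\xi)$ and set $a_{j,k}:=\sum_{\xi\in\Xi}c_\xi f_{j,k}(\xi)$. Reading the hypothesis of (3) as the vanishing of the function
\begin{equation*}
g_k(\zeta) \;=\; \sum_{j\in A_k} a_{j,k}\,\overline{f_{j,k}(\zeta)},\qquad k\in N,
\end{equation*}
the equivalence reduces to the statement that $g_k\equiv 0$ on $\M$ iff $a_{j,k}=0$ for all $j\in A_k$. This is immediate from the linear independence of the orthonormal system $\{f_{j,k}\}_{j=1}^{m_k}\subset L^2(\M)$: if $g_k=0$ as a continuous function on $\M$ then taking $L^2$ inner products against each $f_{i,k}$ for $i\in A_k$ isolates $a_{i,k}=0$; the converse direction is trivial. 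Therefore the simultaneous system in (3) over $k\in N$ is exactly the simultaneous system in (2) over $(j,k)\in\mathcal{F}$, which establishes (2)$\Leftrightarrow$(3).

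The only real subtlety is the termwise rearrangement used to diagonalize $c^{*}K_{\Xi}c$; beyond that, everything reduces to the orthogonality within a single eigenspace $H_k$. I would handle the rearrangement once at the outset by noting that positivity of $d_{j,k}$ together with continuity of $K$ forces the partial sums to converge monotonically to a continuous limit for fixed $c$, justifying the interchange without further analytic work.
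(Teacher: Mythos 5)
Your proposal is correct and follows essentially the same route as the paper: you diagonalize $c^{*}K_{\Xi}c$ into the nonnegative sum $\sum_{(j,k)\in\mathcal{F}}d_{j,k}\bigl|\sum_{\xi}c_{\xi}f_{j,k}(\xi)\bigr|^{2}$ (the paper's equation \eqref{eq:Formaxker3}, with the same appeal to continuity to justify the interchange) and then use linear independence of the eigenfunctions within each $H_k$ for (2)$\Leftrightarrow$(3). The only cosmetic difference is that the paper phrases both directions of (1)$\Leftrightarrow$(2) as proofs by contradiction, whereas you argue directly via singularity of the positive semi-definite matrix $K_{\Xi}$.
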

	\begin{proof} 
		We start with the implication from (1) to (2) and prove by contradiction. Assume (2) does not hold and there exists a set $\Xi$ and coefficients $c_{\xi}\in \C$ for which  
		$$ \sum_{\xi\in \Xi }c_{\xi} \sum_{j\in A_k}f_{j,k}(\xi)\overline{f_{j,k}(\zeta)}=0,\ , \forall k \in N,\zeta \in \M.$$
		Then we immediately deduce 
		$$ \sum_{\xi,\zeta\in \Xi}c_{\xi}\overline{c_{\zeta}}K(\xi,\zeta)=0,$$
		which contradicts (1).
		
		We prove that (2) implies (1)  by contradiction and assume that $K$ is continuous positive definite but not strictly positive definite.  If $K$ is not strictly positive definite there exists a nonempty set of distinct point $\Xi$ and coefficients $c_{\xi}\in \C$ not all zero with  
		$$ \sum_{\xi,\zeta \in \Xi}c_{\xi} \overline{c_{\zeta}} K\left(\xi, \zeta\right)=0. $$
		This is equivalent to 
		\begin{equation}\label{eq:Formaxker3}
			\sum_{k=0}^{\infty}\sum_{j\in A_k} d_{j,k} f_{j,k} \overline{f_{j,k}}=0,
		\end{equation}
		where $f_{j,k}=\sum_{\xi\in\Xi} c_{\xi} f_{j,k}\left(\xi\right)$ and the sums are interchangeable because of the continuity of $K$.
		
		Since we know that all the summands are non negative since the $d_{j,k}$ are non-negative, the overall sum can only be zero if all summands are. For the indices $(j,k)\in \mathcal{F}$ this implies  $f_{j,k}=0$. We have proven that (2) can not hold because at least on $c_{\xi}$ was non zero.
		
		Now we prove the equivalence of (2) and (3). We note that 
		$$ \sum_{\xi\in \Xi }c_{\xi} \sum_{j\in A_k}f_{j,k}(\xi)\overline{f_{j,k}(\zeta)}=\sum_{j\in A_k} \left(\sum_{\xi \in \Xi}c_{\xi}f_{j,k}(\xi)\right) \overline{f_{j,k}(\zeta)}, \qquad \forall \zeta\in \M.$$
		Since the eigenfunctions are linearly independent the last is  zero if and only if 
		$$\sum_{\xi \in \Xi}c_{\xi}f_{j,k}(\xi)=0$$ for all $j\in A_k$, and all $k\in  N$. 
	\end{proof}
	
	The last theorem proves that strict positive definiteness is independent of the precise value of the $d_{j,k}$ but is only depending on the set $\mathcal{F}$. This justifies that we will distinguish between sets the induce strict positive definiteness and sets that do not.
	
	The existing proves for isotropic kernels on compact two point homogeneous manifolds allow the following conditions for sets that induce strict positive definiteness:
	
	\begin{corollary}
		Let $\M\neq \mathbb S^{d-1}$, with $\alpha>\beta$ be a two-point homogeneous manifold and $K$ a continuous kernel of the form \eqref{eqKernHomConv}
		\begin{itemize}
			\item For $K$ to be strictly positive definite it is necessary that $N$ includes infinitely many integers.
			\item For $K$ to be strictly positive definite it is sufficient that 
			$$\mathcal{L}=\lbrace k : d_{j,k}>0 ,\ \forall {j=1,\ldots,m_k}\rbrace $$ includes infinitely many integers.  \end{itemize}
	\end{corollary}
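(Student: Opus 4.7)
The plan is to apply Theorem~\ref{LEEquivspdGinv}, reducing both directions to linear-algebraic statements about the system $\sum_{\xi \in \Xi} c_\xi f_{j,k}(\xi) = 0$. The necessary direction becomes a finite-dimensional counting argument, while the sufficient direction is handled by extracting an auxiliary isotropic kernel from the hypothesis of condition (3) and invoking the known characterization of strict positive definiteness for isotropic kernels on two-point homogeneous spaces with $\alpha > \beta$.

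For the necessary condition, I argue by contrapositive. Suppose $N$ is finite. Then $\mathcal{F} \subseteq \{(j,k) : k \in N,\ 1 \le j \le m_k\}$ has cardinality at most $D := \sum_{k \in N} m_k < \infty$. Choose any set $\Xi \subset \M$ of $D + 1$ distinct points; the linear map $\C^{|\Xi|} \to \C^{|\mathcal{F}|}$ sending $(c_\xi)_{\xi \in \Xi} \mapsto \bigl(\sum_{\xi \in \Xi} c_\xi f_{j,k}(\xi)\bigr)_{(j,k) \in \mathcal{F}}$ has a nontrivial kernel for dimension reasons, and Theorem~\ref{LEEquivspdGinv}(2) immediately exhibits the failure of strict positive definiteness.

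For the sufficient condition, assume $\mathcal{L}$ is infinite and that some finite $\Xi \subset \M$ and scalars $(c_\xi)_{\xi \in \Xi}$ satisfy the hypothesis of Theorem~\ref{LEEquivspdGinv}(3). For each $k \in \mathcal{L}$ we have $A_k = \{1, \ldots, m_k\}$, so the addition formula \eqref{eqSummationFor} collapses the inner sum into a single Jacobi polynomial and yields
$$c_k \sum_{\xi\in\Xi} c_\xi P_k^{(\alpha,\beta)}\bigl(\cos d(\xi,\zeta)\bigr) = 0, \qquad \forall\, \zeta \in \M, \ \forall\, k \in \mathcal{L}.$$
I then fix a positive, sufficiently fast-decaying sequence $(a_k)_{k \in \mathcal{L}}$ so that
$$\tilde K(\xi,\zeta) := \sum_{k \in \mathcal{L}} a_k c_k P_k^{(\alpha,\beta)}\bigl(\cos d(\xi,\zeta)\bigr)$$
is a continuous isotropic positive definite kernel whose positive coefficients are supported exactly on the infinite set $\mathcal{L}$. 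Under $\alpha > \beta$, the existing characterization of isotropic strict positive definiteness on the non-sphere two-point homogeneous manifolds forces $\tilde K$ to be strictly positive definite. Interchanging summations, the vanishing displayed above gives $\sum_{\xi,\zeta \in \Xi} c_\xi \overline{c_\zeta} \tilde K(\xi,\zeta) = 0$, which then forces $c_\xi \equiv 0$ and verifies condition (3).

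I expect the only nontrivial technical point to be ensuring that $\tilde K$ converges absolutely and uniformly to a continuous function while retaining the full infinite coefficient support $\mathcal{L}$; this just amounts to choosing $(a_k)$ to decay fast enough against $c_k P_k^{(\alpha,\beta)}(1)$, which is straightforward from the explicit formulas in the introduction. The role of $\alpha > \beta$ and $\M \neq \S^{d-1}$ is precisely to avoid the sphere's parity refinements, thereby allowing every isotropic kernel with infinitely many positive coefficients to be strictly positive definite.
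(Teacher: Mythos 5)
Your proposal is correct and follows essentially the same route as the paper, whose proof is a one-line citation: reduce to the linear-algebraic conditions of Theorem~\ref{LEEquivspdGinv} and then, for sufficiency, use the addition formula \eqref{eqSummationFor} to collapse the full eigenspace sums for $k\in\mathcal{L}$ into Jacobi polynomials and invoke the characterization of strictly positive definite isotropic kernels from \cite{Barbosa2016} (Theorem 3.3, valid for $\alpha>\beta$). The one place you genuinely diverge is the necessity direction: the paper leans on the isotropic necessity result (Theorem 3.1 of \cite{Barbosa2016}), whereas you give a self-contained dimension count showing that a finite $\mathcal{F}$ cannot separate more than $\sum_{k\in N}m_k$ points, which is more elementary and arguably cleaner; both arguments are valid, and your sufficiency step (choosing $a_k$ decaying against $c_kP_k^{(\alpha,\beta)}(1)$ to build a continuous auxiliary isotropic kernel supported exactly on $\mathcal{L}$) correctly supplies the convergence detail the paper leaves implicit.
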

	\begin{proof}The statements follow from Theorem \ref{LEEquivspdGinv} together with the characterisation of strictly positive definite isotropic kernels on two-point homogeneous spaces in Theorem 3.1 and Theorem 3.3 of \cite{Barbosa2016}.
	\end{proof}
	
	\section{The asymptotic Approach}
	To establish condition which require less positive coefficients, we summaries these known properties of the Jacobi-polynomials from Lemma 2.2 of \cite{Barbosa2016}.
	
	\begin{lemma}\label{LePorpo}
		\begin{enumerate}
			\item $P^{\alpha,\beta}_k(-t)=(-1)^kP_k^{\beta,\alpha}(t),$
			\item $\lim_{k\rightarrow \infty} \frac{P_k^{\alpha,\beta}(t)}{P_k^{\alpha,\beta}(1)}=0,\quad \forall t\in (-1,1)$,
			\item $\lim_{k\rightarrow \infty} P_{k}^{\beta,\alpha}(1)[ P_k^{\alpha,\beta}(1)]^{-1}=0$ if $\alpha >\beta$.
		\end{enumerate}
	\end{lemma}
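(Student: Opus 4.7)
The plan is to handle the three items in the order (1), (3), (2), since (1) and (3) are immediate consequences of the definitions available in the excerpt, while (2) is the genuine analytic fact.

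For (1), I would invoke the Rodrigues formula
\[
P_k^{(\alpha,\beta)}(x)=\frac{(-1)^k}{2^k k!}(1-x)^{-\alpha}(1+x)^{-\beta}\frac{d^k}{dx^k}\bigl[(1-x)^{k+\alpha}(1+x)^{k+\beta}\bigr],
\]
which is compatible with the normalization $P_k^{(\alpha,\beta)}(1)=\Gamma(k+\alpha+1)/[\Gamma(k+1)\Gamma(\alpha+1)]$ fixed in the paper. Substituting $x\mapsto -x$ and performing the change of variable $y=-x$ inside the $k$-th derivative produces exactly one factor of $(-1)^k$ from the chain rule and swaps the two weight exponents, yielding $P_k^{(\alpha,\beta)}(-x)=(-1)^k P_k^{(\beta,\alpha)}(x)$.

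For (3), I would combine the explicit formula stated in the paper for $P_k^{(\alpha,\beta)}(1)$ to write
\[
\frac{P_k^{(\beta,\alpha)}(1)}{P_k^{(\alpha,\beta)}(1)}=\frac{\Gamma(k+\beta+1)}{\Gamma(k+\alpha+1)}\cdot\frac{\Gamma(\alpha+1)}{\Gamma(\beta+1)}.
\]
Stirling's formula gives $\Gamma(k+\beta+1)/\Gamma(k+\alpha+1)\sim k^{\beta-\alpha}$ as $k\to\infty$, and under the hypothesis $\alpha>\beta$ this quotient tends to zero, establishing the third claim.

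For (2), which I expect to be the only nontrivial step, I would appeal to Szeg\H{o}'s classical asymptotic estimate for Jacobi polynomials on compact subsets of the open interval. Writing $t=\cos\theta$ with $\theta\in(0,\pi)$, one has
\[
P_k^{(\alpha,\beta)}(\cos\theta)=k^{-1/2}\,C(\theta,\alpha,\beta)\cos\bigl(N_k\theta+\gamma\bigr)+O(k^{-3/2}),
\]
where $C(\theta,\alpha,\beta)$ is bounded on any compact subinterval of $(0,\pi)$ and $N_k,\gamma$ are explicit. In particular $|P_k^{(\alpha,\beta)}(t)|=O(k^{-1/2})$ pointwise on $(-1,1)$. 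By Stirling applied to the explicit formula for $P_k^{(\alpha,\beta)}(1)$, the denominator grows like $k^{\alpha}/\Gamma(\alpha+1)$. Since all admissible parameter pairs in the excerpt satisfy $\alpha\geq -1/2$, the quotient decays at least like $k^{-\alpha-1/2}\to 0$, proving (2). The main obstacle is avoiding a self-contained proof of Szeg\H{o}'s asymptotic; I would simply quote it (as is done implicitly by citing \cite{Barbosa2016}), since reproving it from the hypergeometric representation would be a long detour unrelated to the paper's theme.
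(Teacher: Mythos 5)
Your three arguments are all essentially correct, but note that the paper does not actually prove this lemma: it imports all three statements verbatim from Lemma 2.2 of \cite{Barbosa2016}, so your proposal is a genuinely self-contained alternative rather than a reconstruction of the paper's (nonexistent) argument. Items (1) and (3) are exactly the standard computations (Rodrigues formula plus the chain-rule sign factor; the explicit value at $1$ plus Stirling), and for item (2) the route through Szeg\H{o}'s interior asymptotic $P_k^{(\alpha,\beta)}(\cos\theta)=O(k^{-1/2})$ for fixed $\theta\in(0,\pi)$ is the classical one; quoting Szeg\H{o} instead of reproving it is reasonable and no less rigorous than the paper's outright citation.

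The one genuine soft spot is the final inference in your proof of (2). From $|P_k^{(\alpha,\beta)}(t)|=O(k^{-1/2})$ and $P_k^{(\alpha,\beta)}(1)\sim k^{\alpha}/\Gamma(\alpha+1)$ the quotient is of order $k^{-\alpha-1/2}$, and this tends to zero only when $\alpha>-1/2$, not when $\alpha\geq -1/2$ as you assert: at $\alpha=-1/2$ the exponent is $0$ and the bound is useless. This is not merely a lost endpoint, because for $\alpha=\beta=-1/2$ (the case $\M=\S^{1}$) the normalized Jacobi polynomials are the Chebyshev polynomials, $P_k^{(-1/2,-1/2)}(\cos\theta)/P_k^{(-1/2,-1/2)}(1)=\cos(k\theta)$, and statement (2) is simply false there. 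The fix is to state and use the hypothesis $\alpha>-1/2$, which holds for every manifold in the paper's list except the circle (the same implicit restriction under which \cite{Barbosa2016} operates).
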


	\begin{theorem}\label{THMAsympt}
		Let $N \subset \Z_+$ be an infinite set and $A_k\subset\{1,\ldots,m_k\}$, $k \in N$, such that
		\begin{align}\label{EQAsympt}
			\underset{k \in N}{\lim} \frac{c_k^{-1}\sum_{j\in A_k^c} f_{j,k}(\xi)\overline{f_{j,k}(\zeta)}}{P_k^{\alpha,\beta}(1)}&=0,\quad \forall \xi,\zeta\in\M
		\end{align}
		where $A_k^c:=\{1,\ldots, m_k\}\setminus A_k$.  
		Then $\mathcal{F}:=\{(j,k): k\in N,j \in A_k\}$ induces strict positive definiteness whenever  $\M\neq\S^{d-1}$. In the case $\M=\S^{d-1}$ the same relation is valid under the additional requirement that $E:=N\cap 2\mathbb{Z}_{+}$ and $O:=N\cap(2\mathbb{Z}_{+}+1)$ are infinite.
	\end{theorem}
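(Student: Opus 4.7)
The plan is to invoke condition (3) of Theorem \ref{LEEquivspdGinv}: assume we have a finite set of distinct points $\Xi \subset \M$ and coefficients $c_\xi$ satisfying
$$\sum_{\xi \in \Xi} c_\xi \sum_{j \in A_k} f_{j,k}(\xi)\overline{f_{j,k}(\zeta)} = 0, \qquad \forall\, k \in N,\ \forall\, \zeta \in \M,$$
and aim to show that every $c_\xi = 0$. The key idea is to complete $A_k$ to the full index set and let the addition formula \eqref{eqSummationFor} bring in the Jacobi polynomials, whose asymptotic decay (Lemma \ref{LePorpo}) will isolate the diagonal term.

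Concretely, for each $k\in N$ rewrite
$$\sum_{j\in A_k} f_{j,k}(\xi)\overline{f_{j,k}(\zeta)} = c_k\, P_k^{(\alpha,\beta)}(\cos d(\xi,\zeta)) - \sum_{j\in A_k^c} f_{j,k}(\xi)\overline{f_{j,k}(\zeta)},$$
plug into the hypothesis, divide through by $c_k P_k^{(\alpha,\beta)}(1)$, and let $k\to\infty$ along $N$. The contribution from $A_k^c$ vanishes in the limit by the standing hypothesis \eqref{EQAsympt} (the sum over $\xi\in\Xi$ is finite). What remains is
$$\lim_{k\in N,\, k\to\infty} \sum_{\xi\in\Xi} c_\xi\, \frac{P_k^{(\alpha,\beta)}(\cos d(\xi,\zeta))}{P_k^{(\alpha,\beta)}(1)} = 0, \qquad \forall\, \zeta \in \M.$$

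To extract $c_{\xi_0}$, specialize $\zeta = \xi_0 \in \Xi$. For $\xi = \xi_0$ the ratio is identically $1$. For $\xi \in \Xi$ with $\cos d(\xi,\xi_0) \in (-1,1)$, Lemma \ref{LePorpo}(2) forces the ratio to $0$. In the non-sphere cases the remaining possibility is $\cos d(\xi,\xi_0) = -1$, but then Lemma \ref{LePorpo}(1) gives $|P_k^{(\alpha,\beta)}(-1)| = P_k^{(\beta,\alpha)}(1)$, and since $\alpha > \beta$, Lemma \ref{LePorpo}(3) again sends the ratio to $0$. Hence the limit equals $c_{\xi_0}$, proving $c_{\xi_0} = 0$ for every $\xi_0\in\Xi$.

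The only remaining obstacle is the sphere case, where $\alpha = \beta$ so Lemma \ref{LePorpo}(3) is unavailable; an antipodal pair $\xi_0, -\xi_0 \in \Xi$ then yields a ratio $P_k^{(\alpha,\alpha)}(-1)/P_k^{(\alpha,\alpha)}(1) = (-1)^k$ that does not decay. This is precisely why the parity hypothesis on $N$ is imposed: taking the limit along $k \in E$ gives $c_{\xi_0} + c_{-\xi_0}\mathbf{1}_{\{-\xi_0\in\Xi\}} = 0$, while taking it along $k \in O$ gives $c_{\xi_0} - c_{-\xi_0}\mathbf{1}_{\{-\xi_0\in\Xi\}} = 0$; adding and subtracting yields $c_{\xi_0} = 0$. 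Iterating over $\xi_0 \in \Xi$ completes the proof.
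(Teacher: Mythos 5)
Your proposal is correct and follows essentially the same route as the paper: reduce to condition (3) of Theorem \ref{LEEquivspdGinv}, use the addition formula to replace the partial sum over $A_k$ by $c_kP_k^{(\alpha,\beta)}(\cos d(\xi,\zeta))$ minus the $A_k^c$ remainder (killed by hypothesis \eqref{EQAsympt}), and apply Lemma \ref{LePorpo} to get the Kronecker-delta limit, with the parity split along $E$ and $O$ handling antipodal pairs on the sphere. The only cosmetic difference is that the paper assumes without loss of generality that $\Xi$ is antipodally symmetric, whereas you carry an indicator $\mathbf{1}_{\{-\xi_0\in\Xi\}}$; the two are equivalent.
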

	\begin{proof}
		We consider the kernel
		\begin{equation}
			K(\xi,\zeta)=\sum_{k=0}^{\infty} a_k \sum_{j\in A_k} f_{j,k}(\xi) \overline{f_{j,k}(\zeta)},\quad a_k>0.
		\end{equation}
		By Theorem \ref{LEEquivspdGinv}, proving that $K$ is strictly positive definite is sufficient for the proof of the theorem. We see that $K$ is not strictly positive definite if and only if there exists a set of distinct points $\Xi\in \M$ and coefficients $c_{\xi}\in \C$ such that
		\begin{equation}\label{eq2THMAsympt}
			\sum_{\xi\in \Xi}c_{\xi} \sum_{j\in A_k} f_{j,k}(\xi)\overline{f_{j,k}(\zeta)}=0,\quad \forall k\in N,\ j\in A_k,\ \zeta \in \M
		\end{equation}
		and at least one $c_{\xi}\neq 0$, this follows from (3) of  Theorem \ref{LEEquivspdGinv}. If $\M \neq \S^{d-1}$, by \Cref{LePorpo}  we obtain that for $\xi, \zeta \in \M $
		\begin{align*}
			\underset{k \in N}{\lim} \frac{c_{k}^{-1}}{P^{\alpha,\beta}_k(1)} \sum_{j\in A_k} f_{j,k}(\xi)\overline{f_{j,k}(\zeta)}&=\\
			\underset{k \in N}{\lim}  \frac{c_{k}^{-1}}{P^{\alpha,\beta}_k(1)}\left( P^{\alpha,\beta}_k(\cos( d(\xi,\zeta)) - \sum_{j\in A_k^c}f_{j,k}(\xi)\overline{f_{j,k}(\zeta)}\right)&=\delta_{\xi, \zeta}.
		\end{align*}
		Hence, if we apply this relation to Equation \ref{eq2THMAsympt}, we conclude that
		$$
		0=\lim_{k \in N}\sum_{\xi\in \Xi}c_{\xi} \sum_{j\in A_k} f_{j,k}(\xi)\overline{f_{j,k}(\zeta)}=c_{\zeta}, \quad \zeta \in \Xi.
		$$
		For the case of the sphere, we assume without loss of generalization that a point $\xi \in \Xi$ if and only if $-\xi \in \Xi$, as it does not affect the result we aim to prove and simplifies the terminology. \\
		Again, by \Cref{LePorpo} we obtain that for $\xi, \zeta \in \S^{d-1}$
		\begin{align*}
			\underset{k \in E}{\lim} \frac{c_{k}^{-1}}{P^{\alpha,\beta}_k(1)} \sum_{j\in A_k} f_{j,k}(\xi)\overline{f_{j,k}(\zeta)}&=\\
			\underset{k \in E}{\lim}  \frac{c_{k}^{-1}}{P^{\alpha,\beta}_k(1)}\left( P^{\alpha,\beta}_k(\cos( d(\xi,\zeta)) - \sum_{j\in A_k^c}f_{j,k}(\xi)\overline{f_{j,k}(\zeta)}\right)&=\delta_{\xi, \zeta}+ \delta_{\xi, -\zeta} .
		\end{align*}
		Similarly, 
		$$
		\underset{k \in O}{\lim} \frac{c_{k}^{-1}}{P^{\alpha,\beta}_k(1)} \sum_{j\in A_k} f_{j,k}(\xi)\overline{f_{j,k}(\zeta)}=\delta_{\xi, \zeta} - \delta_{\xi, -\zeta}.
		$$
		If we apply both relations to Equation \ref{eq2THMAsympt}, we conclude that for every $\zeta \in \Xi$
		\begin{align*}
			0=\lim_{k \in E}\sum_{\xi\in \Xi}c_{\xi} \sum_{j\in A_k} f_{j,k}(\xi)\overline{f_{j,k}(\zeta)}&=c_{\zeta} + c_{-\zeta},\\
			0=\lim_{k \in O}\sum_{\xi\in \Xi}c_{\xi} \sum_{j\in A_k} f_{j,k}(\xi)\overline{f_{j,k}(\zeta)}&=c_{\zeta} - c_{-\zeta},
		\end{align*}
		which implies that $c_{\zeta}= c_{-\zeta}=0$.
	\end{proof}
	
	\section{The asymptotic approach for the $d$-sphere}

	Now we focus on the $d$-sphere, which is the case where $\alpha=\beta=(d-3)/2$ and we present sufficient conditions for which the key assumptions of \Cref{THMAsympt} are valid. The sufficient condition will be given in terms of the asymptotic behaviour of $|A_k|$ for $k\in E$ and $O$.
	The eigenfunctions corresponding to the eigenvalues $\lambda_{k}=k\left(k+d-1\right)$ are spherical harmonics and the number of eigenfunctions corresponding to the eigenvalue $\lambda_k$ is denoted by $N_{k,d}$. The numbers are given by $N_{0,d}=1$,
	\[N_{k,d}=\frac{\left(2k+d-2\right)\left(k+d-3\right)!}{k!\left(d-2\right)!}.\] 
	
	For $\xi \in \S^{d-1}$,  with polar coordinate representation $\left(\theta_1,\ldots,\theta_{d-1}\right)^T$ satisfying
	\begin{align*}
		\xi_1&=\cos(\theta_{d-1})\\
		\xi_2&=\sin(\theta_{d-1})\cos(\theta_{d-2})\\
		&\quad \vdots \\
		\xi_{d-1}&=\sin(\theta_{d-1})\sin(\theta_{d-2}) \cdots \cos(\theta_1)\\
		\xi_d&=\sin(\theta_{d-1})\sin(\theta_{d-2}) \cdots \sin(\theta_1),
	\end{align*}
	where $\theta_1\in[0,2\pi)$ and the others $\theta_j\in [0,\pi]$.
	The spherical harmonics of degree $\alpha_{d-1}$ can explicitly be given by
	\begin{equation}\label{eq:spherharmaxial}
		Y_{\alpha_1,\ldots,\alpha_{d-1}}(\xi)=\frac{1}{\sqrt{2\pi}}e^{i\alpha_1\theta_1} \prod_{j=2}^{d-1}{}_{j}\tilde{\mathsf{P}}_{\alpha_j}^{\alpha_{j-1}}\left(\theta_{j}\right),\end{equation}
	where $\alpha_1,\ldots,\alpha_{d-1}$ are integers satisfying
	\[\alpha_{d-1}\geq \cdots \geq \vert \alpha_1\vert\]
	and 
	\begin{equation} \label{EQLEgendre}{}_{j}\tilde{\mathsf{P}}_{L}^{\ell}(\theta)=\left (\frac{\pi\Gamma(j/2)}{\Gamma((j+1)/2)  } \right )^{1/2} {}_{j}c_L^{\ell}\left(\sin\left(\theta\right)\right)^{-\left(2-j\right)/2}\mathsf{P}_{L+\left(j-2\right)/2}^{-\left(\ell+\left(j-2\right)/2\right)}\left(\cos\left(\theta\right)\right),
	\end{equation}
	where $\mathsf{P}_{\nu}^{\mu}$ are the associated Legendre functions and
	$$ {}_{j}c_L^{\ell}:=   \left( \frac{2L+j-1}{2}\frac{\left(L+\ell+j-2\right)!}{\left(L-\ell\right)!}\right)^{1/2}.$$
	The formula is taken from \cite{Higuchi1987}, Equation (2.5) and $\alpha_{d-1}$ is the degree of the spherical harmonic, also, it is after a reparametrization a consequence of  Theorem $1.5.1$ in \cite{Dai2013}. A small difference is that we use 
	$$
	\frac{1}{Vol(\S^{d-1})}\int_{\S^{d-1}}Y_{\alpha}(x)Y_{\beta}(x)d\sigma(x)= \delta_{\alpha, \beta} 
	$$
	while \cite{Higuchi1987}  uses $1/ 2\pi$ instead of  $1/Vol(\S^{d-1})$, which is solved by adding the first constant in the definition of  ${}_{j}\tilde{\mathsf{P}}_{L}^{\ell}$. The spherical harmonic is an eigenfunction corresponding to eigenvalue $\lambda_{\alpha_{d-1}}$. We define the index set corresponding to the order $k$ as
	\[\tau_k^{d-1}:=\lbrace \alpha=(\alpha_1,\ldots, \alpha_{d-1})\in \Z^{d-1},\ |\alpha_1|\leq \alpha_2\leq \cdots \le \alpha_{d-1}=k\rbrace. \]
	The spherical harmonics $Y_{\alpha}$ with $\alpha \in \tau_k^{d-1}$ form an orthonormal basis of $H_k$ and therefore  $|\tau_k^{d-1}|=N_{k,d}$. 
	
	Since
	${}_{j}\tilde{\mathsf{P}}_{L}^{\ell}(0) ={}_{j}\tilde{\mathsf{P}}_{L}^{\ell}(\pi)=0$ whenever $\ell>0$, we have that if $x \in \S^{d-j-2}$ where $1\leq j\leq d-2$
	\[ Y_{\alpha}((x,0))=0,\quad \alpha\in \tau_{k}^{d-1}, \text{when } \alpha_{j}\neq0.\]
	The complement of the set $\lbrace \alpha_{j}\neq0\rbrace$ in the case $j=d-2$ only includes the index $\alpha=(0,\ldots,0,k)$. More generally, we define
	\[{}_j\tau_k^{d-1}:=\lbrace \alpha \in \tau_k^{d-1},\quad \alpha_j=0\rbrace.\]
	
	In order to use $\vert A^c_k\vert$ in \eqref{EQAsympt} we will derive an estimate of 
	$$\vert Y_{\alpha}(\xi)\overline{Y_{\alpha}(\zeta)\vert},\quad \forall \alpha\in \tau_k^{d-1}\setminus {}_j\tau_k^{d-1}$$
	which only depends on $k$ and $j$. 
	To determine an upper bound for \eqref{EQLEgendre} we note that in Theorem 2 in \cite{Lohofer1991} it is proved the following inequality
	\begin{equation}\label{LohoferTh2}
		|\mathsf{P}_{m}^{n}(\cos \theta)| \leq 
		\frac{\Gamma(1/4) (\sin (\theta))^{-1/4} }{\pi} \sqrt{\frac{\Gamma ( n+m+1)}{\Gamma (n-m+1)}} \frac{1}{m^{1/4}}, \quad  m \geq |n|.
	\end{equation}

	We further need an inequality for the case of half integer coefficients, $|\mathsf{P}_{m+1/2}^{n+1/2}(\cos \theta)| $. For this we use an estimate for Jacobi polynomials obtained in \cite{Haagerup2014}. We recall the relation between Legendre polynomials and Gegenbauer polynomials (Equation $
	14.3.21	$  in \cite{NIST:DLMF})
	
	\begin{equation}\label{LohoferA4}
		\mathsf{P}^{\mu}_{\nu}( x )=\frac{2^{\mu}\Gamma(1-2\mu)%
			\Gamma(\nu+\mu+1)}{\Gamma(\nu-\mu+1)\Gamma(1-\mu%
			)(1-x^{2})^{\mu/2}}C^{(\frac{1}{2}-\mu)}_{\nu+\mu}(x) 
	\end{equation}

	and the relation between Gegenbauer polynomials and Jacobi polynomials (Equation  $18.7.1$ in \cite{NIST:DLMF})

	\begin{equation}\label{18.7.1}
		C^{\lambda}_{n}(x)= \frac{\Gamma(2\lambda +n)\Gamma(\lambda +1/2)}{\Gamma(2\lambda)\Gamma(\lambda+1/2+n)}P^{(\lambda-\frac{1}{2},\lambda-\frac{1}{2})}_{n}(x).
	\end{equation}
	
	Thus obtaining that for  $ \nu \geq \mu \geq 0$ with $\nu-\mu \in \N$:
	\begin{align*}
		\mathsf{P}^{-\mu}_{\nu}( x )&= \frac{2^{-\mu}\Gamma(1+2\mu)
			\Gamma(\nu-\mu+1)}{\Gamma(\nu+\mu+1)\Gamma(1+\mu
			)(1-x^{2})^{-\mu/2}}C^{(\frac{1}{2}+\mu)}_{\nu-\mu}(x)\\
		&=\frac{2^{-\mu}\Gamma(1+2\mu)
			\Gamma(\nu-\mu+1)}{\Gamma(\nu+\mu+1)\Gamma(1+\mu
			)(1-x^{2})^{-\mu/2}}\frac{\Gamma(1+\mu +\nu)\Gamma(\mu+1)}{\Gamma(2\mu +1)\Gamma(\nu +1)} P^{(\mu,\mu)}_{\nu - \mu}(x)\\
		&= (1-x^{2})^{\mu/2}\frac{\Gamma(\nu - \mu +1)}{2^{\mu}\Gamma(\nu +1)} P^{(\mu,\mu)}_{\nu - \mu}(x).
	\end{align*}
	
	%\begin{equation}\label{gegrprsph}
	%\begin{split}
	%	& _{j}\tilde{\mathsf{P}}_{L}^{\ell}(\cos(\theta))=(\sin(\theta))^{\ell}C_{L-\ell}^{\ell+(j-1)/2}(\cos(\theta)) \\
	%	&\otimes \left (\frac{\pi\Gamma(j/2)}{\Gamma((j+1)/2)  } \right )^{1/2}	\sqrt{\frac{\Gamma(L-\ell +1))}{\Gamma(L+\ell +j-1)}} \frac{\Gamma(2\ell + j -1)2^{-\ell - (j-1)/2}(2L + j-1)^{1/2}}{\Gamma(\ell +j/2)},	
	%\end{split}	
	%\end{equation}

	By settling $\alpha = \beta $ in Theorem $1.1$ in \cite{Haagerup2014}, we have that there exists a constant $C \leq  12$ for which 
	\begin{equation*}
		%\label{1.1haagerup}
		(1-x^{2})^{\alpha/2+1/4}|P_{n}^{\alpha, \alpha}(x)| \leq C\frac{2^{\alpha}\Gamma(n+\alpha +1) }{\Gamma(n+1)^{1/2}\Gamma(n+2\alpha +1)^{1/2}}(2n+2\alpha +1)^{-1/4}.
	\end{equation*}
	
	Combining the last equations we find 
	\begin{equation} \label{conta1}
		|\mathsf{P}^{-\mu}_{\nu}( x )|\leq  C (1-x^{2})^{-1/4}\frac{\Gamma(\nu - \mu +1)^{1/2}}{\Gamma(\nu +\mu +1)^{1/2}}(2\nu +1)^{-1/4}.
	\end{equation}
	
	Note that this estimate implies an  almost similar  inequality to the one in Equation \ref{LohoferTh2} when $\nu=m$ and  $\mu=n$.  
	%In particular, it also implies that 
	%\begin{equation} \label{conta1}
	%	|\mathsf{P}^{-(n+1/2)}_{m+1/2}( x )| \leq C (1-x^{2})^{-1/4} \sqrt{\frac{\Gamma(m-n+1)}{\Gamma(m+n+2)}} (2m+2)^{-1/4}.
	%\end{equation}
	Now, we estimate ${}_{j}\tilde{\mathsf{P}}_{L}^{\ell}(\theta)$ defined in \eqref{EQLEgendre} based on $\mathsf{P}_{m}^{-n}$ and   $\mathsf{P}_{m+1/2}^{-(n + 1/2)}$,
	%$$
	%_{j}\tilde{\mathsf{P}}_{L}^{\ell}(\cos(\theta)):= \sqrt{\frac{(2L + j-1)\Gamma(L+\ell + j-1)}{2\Gamma(L-\ell +1)} } (\sin (\theta))^{-(j-2)/2} \mathsf{P}_{L+(j-2)/2}^{-(\ell + (j-2)/2)}(\cos(\theta)).
	%$$	
	where $j\geq 2$ and  $L\geq \ell \geq 0$. When $j=2$, we also include the cases $L\geq |\ell| $.
	\begin{corollary}\label{finalest} For each $j \geq 2$ there exists a function $C_{j}:(0, \pi ) \to \mathbb{R}$ such that   
		$$
		|_{j}\tilde{\mathsf{P}}_{L}^{\ell}(\cos(\theta))| \leq  C_{j}(\theta) (2L+j-1)^{1/4}, \quad  L \geq \ell \geq 0.
		$$
		Further, we can include the points $0$ and $\pi$ if $ \ell>0$.
	\end{corollary}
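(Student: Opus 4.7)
The plan is to substitute the pointwise bound derived in \eqref{conta1} into the defining identity \eqref{EQLEgendre}. With the parameter choice $\nu := L+(j-2)/2$ and $\mu := \ell + (j-2)/2$, one checks $\nu - \mu = L - \ell \in \N$ and $\nu \geq \mu \geq 0$, so the hypotheses of \eqref{conta1} are met; the resulting estimate for $\mathsf{P}^{-(\ell+(j-2)/2)}_{L+(j-2)/2}(\cos\theta)$ carries the factors $(\sin\theta)^{-1/2}$, $((L-\ell)!/(L+\ell+j-2)!)^{1/2}$, and $(2L+j-1)^{-1/4}$, using $2\nu+1 = 2L+j-1$, $\Gamma(\nu-\mu+1) = (L-\ell)!$ and $\Gamma(\nu+\mu+1) = (L+\ell+j-2)!$.

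The key observation is that the normalization ${}_jc_L^\ell$ in \eqref{EQLEgendre} supplies precisely the reciprocal factorial ratio $((L+\ell+j-2)!/(L-\ell)!)^{1/2}$ together with $((2L+j-1)/2)^{1/2}$. Multiplying the previous estimate by ${}_jc_L^\ell$ and by the $(\sin\theta)^{(j-2)/2}$ prefactor from \eqref{EQLEgendre} forces an exact cancellation of the factorials; the net power of $2L+j-1$ is $1/2 - 1/4 = 1/4$, and the $(\sin\theta)$ exponents combine to $(j-3)/2$. Packaging the residual constants into
$$ C_j(\theta) := \frac{C}{\sqrt{2}}\left(\frac{\pi\,\Gamma(j/2)}{\Gamma((j+1)/2)}\right)^{1/2}(\sin\theta)^{(j-3)/2} $$
yields the claimed inequality for $\theta \in (0,\pi)$.

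For the endpoint extension I treat the two regimes separately. When $j \geq 3$ the exponent $(j-3)/2$ is nonnegative, so $C_j$ is already bounded on $[0,\pi]$ and no further argument is needed. When $j = 2$, the factor $(\sin\theta)^{-1/2}$ is singular at $0$ and $\pi$, but the explicit representation $\mathsf{P}^{-\mu}_\nu(\cos\theta) = (\sin\theta)^\mu \tfrac{\Gamma(\nu-\mu+1)}{2^\mu\Gamma(\nu+1)}P^{(\mu,\mu)}_{\nu-\mu}(\cos\theta)$, used already to derive \eqref{conta1}, shows that ${}_j\tilde{\mathsf{P}}_L^\ell(\theta)$ vanishes at the endpoints to order $(j-2)/2 + \mu = \ell + j - 2 \geq \ell$. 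Hence under the hypothesis $\ell > 0$ the left-hand side vanishes at $\theta\in\{0,\pi\}$ and the inequality extends trivially, after assigning $C_j$ any finite values there.

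I expect the main obstacle to be not conceptual but bookkeeping: verifying that the gamma-function factors cancel exactly, including the interplay of the $2^\mu$ and $\Gamma(\nu+1)$ terms that appear when one passes from $\mathsf{P}^{-\mu}_\nu$ to $P^{(\mu,\mu)}_{\nu-\mu}$, without losing track of the half-integer shifts. Once that cancellation is confirmed, the rest of the argument is a single direct substitution.
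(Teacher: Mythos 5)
Your overall strategy is the same as the paper's: substitute the bound \eqref{conta1} (with $\nu=L+(j-2)/2$, $\mu=\ell+(j-2)/2$) into \eqref{EQLEgendre}, observe that the factorial ratio hidden in ${}_{j}c_L^{\ell}$ cancels exactly against the one in \eqref{conta1}, and collect the net power $(2L+j-1)^{1/2-1/4}$. That part of your computation is correct. The problem is the sine exponent and, through it, your treatment of the endpoints. You read the prefactor in \eqref{EQLEgendre} literally as $(\sin\theta)^{+(j-2)/2}$ and arrive at $C_j(\theta)\propto(\sin\theta)^{(j-3)/2}$; the exponent as printed, $-\left(2-j\right)/2$, does say that, but it is a sign slip in the displayed definition: the intended prefactor is $(\sin\theta)^{(2-j)/2}$, as one sees from the fact (used in the second half of the paper's proof via \eqref{LohoferA4}) that ${}_{j}\tilde{\mathsf{P}}_{L}^{\ell}(\cos\theta)$ must be a multiple of $(\sin\theta)^{\ell}C^{\ell+(j-1)/2}_{L-\ell}(\cos\theta)$, not of $(\sin\theta)^{\ell+j-2}C^{\ell+(j-1)/2}_{L-\ell}(\cos\theta)$. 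With the correct exponent the constant is $C_j(\theta)\propto(\sin\theta)^{-(j-1)/2}$, which is singular at $0$ and $\pi$ for \emph{every} $j\geq 2$.

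This matters because your endpoint discussion for $j\geq 3$ then fails in two ways. First, under the correct exponent $C_j$ is never bounded at the poles, so ``no further argument is needed'' does not apply. Second, and independently of any typo, the conclusion you draw for $j\geq 3$ --- that the bound extends to $\theta\in\{0,\pi\}$ with no restriction on $\ell$ --- cannot be true: for $\ell=0$ the function ${}_{j}\tilde{\mathsf{P}}_{L}^{0}$ is (up to normalization) a Gegenbauer polynomial whose value at the poles grows like $L^{(j-1)/2}$, which is not $O(L^{1/4})$; this is exactly why the corollary restricts the endpoint claim to $\ell>0$. The correct endpoint argument is the one you give only for $j=2$: by \eqref{LohoferA4}, ${}_{j}\tilde{\mathsf{P}}_{L}^{\ell}(\cos\theta)$ vanishes at $\theta\in\{0,\pi\}$ to order $\ell$ (not $\ell+j-2$) when $\ell>0$. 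That argument is needed, and works, for every $j\geq 2$, and it is precisely what the paper does.
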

	
	\begin{proof}
		Indeed, by equation \eqref{conta1} we have that
		\begin{align*}
			&\frac{\Gamma((j+1)/2)^{1/2}}{\pi^{1/2}\Gamma(j/2)^{1/2}}	|_{j}\tilde{\mathsf{P}}_{L}^{\ell}(\cos(\theta))| \leq \left (\frac{2L + j-1}{2}\right )^{1/2} (\sin (\theta))^{-(j-2)/2}C \sin(\theta)^{-1/2}(2L+j-1)^{-1/4}\\
			&= \frac{ C(\sin (\theta))^{-(j-1)/2}}{\sqrt{2}}(2L+j-1)^{1/4}.	
		\end{align*}
		Assume $ \ell>0$, by \eqref{LohoferA4} we have that the function $	_{j}\tilde{\mathsf{P}}_{L}^{\ell}(\cos(\theta))$ is a multiple of $(\sin(\theta))^{\ell}C_{L-\ell}^{\ell+(j-1)/2}(\cos(\theta))$, 
		hence $	_{j}\tilde{\mathsf{P}}_{L}^{\ell}(\pm1)=0$.
	\end{proof}

	\begin{lemma}\label{EstimateSphere}
		Let $\alpha\in \tau_k^{d-1}\setminus {}_j\tau_k^{d-1}$ and $j\in\lbrace 1,\ldots,d-2\rbrace$, then there exist a function $D:\S^{d-1}\rightarrow \R$ independent of $\alpha$ such that for any $\xi\in\S^{d-1}$:
		\[\vert Y_{\alpha}(\xi)\vert \leq D(\xi)\sqrt{N_{\alpha_{j},j+1}}\prod_{\ell=j+1}^{d-1} (2\alpha_{\ell}+\ell-1)^{1/4}.\]
		
	\end{lemma}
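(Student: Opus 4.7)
The plan is to exploit the product structure of $Y_\alpha$ in \eqref{eq:spherharmaxial} by splitting at the distinguished index $j$. Write $Y_\alpha(\xi) = F(\xi)\cdot G(\xi)$ with
\[
F(\xi) := \frac{1}{\sqrt{2\pi}} e^{i\alpha_1\theta_1} \prod_{\ell=2}^{j} {}_\ell\tilde{\mathsf{P}}_{\alpha_\ell}^{\alpha_{\ell-1}}(\theta_\ell), \qquad G(\xi) := \prod_{\ell=j+1}^{d-1} {}_\ell\tilde{\mathsf{P}}_{\alpha_\ell}^{\alpha_{\ell-1}}(\theta_\ell).
\]
The key observation is that $F$ has exactly the form prescribed by \eqref{eq:spherharmaxial} in dimension $j+1$: viewing $(\theta_1,\ldots,\theta_j)$ as polar coordinates on $\S^j$ and $(\alpha_1,\ldots,\alpha_j)$ as an index in $\tau^{j}_{\alpha_j}$, $F$ is one of the basis spherical harmonics of degree $\alpha_j$ on $\S^j$.

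The first step is then to obtain the uniform bound $|F(\xi)| \leq \sqrt{N_{\alpha_j, j+1}}$. I would apply the addition formula \eqref{eqSummationFor} on $\S^j$ at coincident arguments: summing $|F|^2$ over the entire orthonormal basis of the eigenspace $H_{\alpha_j}\subset L^2(\S^j)$ yields $c_{\alpha_j}P_{\alpha_j}^{((j-2)/2,(j-2)/2)}(1)$, which after matching the Jacobi parameters with the dimension formula equals $N_{\alpha_j,j+1}$. Since every summand is nonnegative, each is at most $N_{\alpha_j, j+1}$, giving the claimed bound on $F$ independent of $\alpha_1,\ldots,\alpha_{j-1}$ and of $\xi$.

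The second step is to bound $G$ by applying Corollary \ref{finalest} factor by factor, producing
\[
|G(\xi)| \leq \prod_{\ell=j+1}^{d-1} C_\ell(\theta_\ell)\,(2\alpha_\ell + \ell - 1)^{1/4},
\]
and collecting the angle-dependent constants together with $(2\pi)^{-1/2}$ into a function $D(\xi)$ that depends on $\xi$ but not on $\alpha$. Multiplying the bounds on $F$ and $G$ then yields the statement for all $\xi$ in the interior of the coordinate chart.

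The main obstacle is the boundary case where some $\sin\theta_\ell$ vanishes for $\ell \geq j+1$, since Corollary \ref{finalest} only defines $C_\ell$ on $(0,\pi)$ and the implicit $(\sin\theta_\ell)^{-(\ell-1)/2}$ factor blows up at the endpoints. Here the hypothesis $\alpha\in\tau_k^{d-1}\setminus{}_j\tau_k^{d-1}$ is crucial: it forces $\alpha_j>0$, and by the monotonicity $\alpha_j\leq \alpha_{j+1}\leq\cdots\leq \alpha_{d-1}$, every upper index $\alpha_{\ell-1}$ appearing in $G$ is strictly positive. The second clause of Corollary \ref{finalest} then ensures that ${}_\ell\tilde{\mathsf{P}}_{\alpha_\ell}^{\alpha_{\ell-1}}$ vanishes at $\theta_\ell\in\{0,\pi\}$, so $Y_\alpha(\xi)=0$ at any such degenerate $\xi$ and the inequality is preserved by simply setting $D(\xi):=0$ there.
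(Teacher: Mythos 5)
Your proposal follows essentially the same route as the paper's proof: factor $Y_\alpha$ at the index $j$ into a degree-$\alpha_j$ spherical harmonic on $\S^j$, bound that factor by $\sqrt{N_{\alpha_j,j+1}}$ via the addition formula at coincident arguments, and bound the remaining product of ${}_\ell\tilde{\mathsf{P}}$-factors by Corollary \ref{finalest}, absorbing the $C_\ell(\theta_\ell)$ into $D(\xi)$. Your explicit treatment of the degenerate angles $\theta_\ell\in\{0,\pi\}$ (using $\alpha_j\neq 0$ and monotonicity of the indices to force $Y_\alpha$ to vanish there) is a point the paper leaves implicit, but it is consistent with the second clause of Corollary \ref{finalest} and does not change the argument.
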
 
	\begin{proof}
		As a result of the representation of the spherical harmonics in Equation \ref{eq:spherharmaxial}, we find that 
		\begin{equation}
			Y_{\alpha_1,\ldots,\alpha_{d-1}}(\xi)=Y_{\alpha_1,\ldots,\alpha_{j}}(\xi') \prod_{\ell=j+1}^{d-1} {}_{\ell}\tilde{P}_{\alpha_{\ell}}^{\alpha_{\ell-1}}\left(\theta_{\ell}\right),\end{equation}
		and $\xi'$  is the point on $\mathbb{S}^{j}$ with polar coordinates $(\theta_1,\ldots,\theta_{j})$ and $Y_{\alpha_1,\ldots,\alpha_{j}}$ is a spherical harmonic in $\mathbb{S}^{j}$ of degree $\alpha_{j}$. By applying the universal estimate
		\[\vert Y_{\alpha_1,\ldots,\alpha_{j}}(\xi') \vert \leq \sqrt{N_{\alpha_{j},j+1}}\]
		to the first part, the estimate is a consequence of the summation formula of spherical harmonics \cite{Dai2013} Equation 1.2.8  and applying  \Cref{finalest} to the second part of the spherical harmonics we conclude that  
		\[\vert Y_{\alpha}(x)\vert \leq \sqrt{N_{\alpha_{j},j+1}}\prod_{\ell=j+1}^{d-1} C_{\ell}(\theta_{\ell})(2\alpha_{\ell}+\ell-1)^{1/4}.\]
	\end{proof}
	
	We can restate the last theorem to include all indices and expressing the restrictions in terms of the point.
	
	\begin{proposition}
		Let $\xi \in \S^{d-1}$ and $j_{\xi}\in\lbrace 2,\ldots,d-1\rbrace$ defined as $j_{\xi}=\max (\{2\}\cup\{j \mid \theta_{j} \in \{0, \pi\}\})$, then there exist a function $D:\S^{d-1}\rightarrow \R$ independent of $\alpha$ such that for any $\xi\in\S^{d-1}$:
		\[\vert Y_{\alpha}(\xi)\vert \leq D(\xi)\sqrt{N_{\alpha_{j_{\xi}},j_{\xi}+1}}\prod_{\ell={j_{\xi}}+1}^{d-1} (2\alpha_{\ell}+\ell-1)^{1/4}.\]
	\end{proposition}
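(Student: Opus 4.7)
The plan is to adapt the proof of \Cref{EstimateSphere} by choosing the factorization index to be $j_\xi$ (which depends only on $\xi$) rather than an arbitrary $j$ constrained by $\alpha$. This is the right choice because, by the definition of $j_\xi$ as the maximum index with $\theta_{j_\xi}\in\{0,\pi\}$ (or $j_\xi=2$ if no such index exists), we automatically have $\theta_\ell\in(0,\pi)$ for every $\ell>j_\xi$, which is exactly what is needed to apply \Cref{finalest} to those Legendre-type factors without any side condition on $\alpha_{\ell-1}$.

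Concretely, I would write
\begin{equation*}
Y_{\alpha_1,\ldots,\alpha_{d-1}}(\xi)=Y_{\alpha_1,\ldots,\alpha_{j_\xi}}(\xi')\prod_{\ell=j_\xi+1}^{d-1}{}_{\ell}\tilde{\mathsf{P}}_{\alpha_\ell}^{\alpha_{\ell-1}}(\theta_\ell),
\end{equation*}
where $\xi'\in\mathbb{S}^{j_\xi}$ has polar coordinates $(\theta_1,\ldots,\theta_{j_\xi})$ and $Y_{\alpha_1,\ldots,\alpha_{j_\xi}}$ is a spherical harmonic of degree $\alpha_{j_\xi}$ on $\mathbb{S}^{j_\xi}$ (the product is empty when $j_\xi=d-1$). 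The distal product is bounded by $\prod_{\ell=j_\xi+1}^{d-1}C_\ell(\theta_\ell)(2\alpha_\ell+\ell-1)^{1/4}$ via \Cref{finalest}, exactly as in \Cref{EstimateSphere}. For the remaining factor, the universal pointwise estimate coming from the addition formula (with $\xi=\zeta$) gives $|Y_{\alpha_1,\ldots,\alpha_{j_\xi}}(\xi')|\leq\sqrt{N_{\alpha_{j_\xi},j_\xi+1}}$, and taking $D(\xi):=(2\pi)^{-1/2}\prod_{\ell=j_\xi+1}^{d-1}C_\ell(\theta_\ell)$ (independent of $\alpha$) yields the advertised inequality.

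The step that requires the most care is the case $j_\xi>2$ with $\theta_{j_\xi}\in\{0,\pi\}$, because then $\xi'$ is a boundary point of the polar chart on $\mathbb{S}^{j_\xi}$ and one might worry that the proximal spherical harmonic factor is not controlled at this "pole". I would resolve this by splitting on $\alpha_{j_\xi-1}$: if $\alpha_{j_\xi-1}>0$ then ${}_{j_\xi}\tilde{\mathsf{P}}_{\alpha_{j_\xi}}^{\alpha_{j_\xi-1}}(\theta_{j_\xi})=0$ by the identity recalled just before \Cref{EstimateSphere}, so the whole spherical harmonic $Y_{\alpha_1,\ldots,\alpha_{j_\xi}}(\xi')$ vanishes and the inequality is trivial; otherwise the monotonicity $|\alpha_1|\leq\alpha_2\leq\cdots\leq\alpha_{j_\xi-1}=0$ forces all lower indices to be zero, and the resulting zonal-type spherical harmonic on $\mathbb{S}^{j_\xi}$ still obeys the universal pointwise bound $\sqrt{N_{\alpha_{j_\xi},j_\xi+1}}$. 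The main conceptual obstacle is thus not a new analytic estimate but the recognition that the universal bound is valid everywhere on $\mathbb{S}^{j_\xi}$ (poles included), whereas \Cref{finalest} is not; by absorbing the singular $(j_\xi)$-th factor into the spherical harmonic rather than estimating it in isolation, we sidestep the only place where the argument of \Cref{EstimateSphere} was sensitive to the location of $\xi$.
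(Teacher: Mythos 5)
Your argument is correct and follows essentially the same route as the paper, which states this proposition without a separate proof, treating it as a restatement of \Cref{EstimateSphere}: you rerun that lemma's factorization with the cut placed at $j_\xi$, observe that $\theta_\ell\in(0,\pi)$ for all $\ell>j_\xi$ so that \Cref{finalest} applies to every distal factor, and invoke the universal addition-formula bound (valid at poles as well) for the proximal spherical harmonic on $\mathbb{S}^{j_\xi}$. One small correction: the factor $(2\pi)^{-1/2}$ should not appear in your $D(\xi)$, since it is already contained in $Y_{\alpha_1,\ldots,\alpha_{j_\xi}}(\xi')$ in your factorization; with $D(\xi)=\prod_{\ell=j_\xi+1}^{d-1}C_\ell(\theta_\ell)$ the inequality follows exactly as you derive it.
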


	Now we use the result of \Cref{EstimateSphere} together with \Cref{THMAsympt}.
	We assume ${}_{j}\tau_k^{d-1}\subset A_k$ for all $n\in E\cup O$ and deduce 
	\begin{align}
		\vert Y_{\alpha}(\xi)\overline{Y_{\alpha}(\zeta)}\vert\leq  D(\xi)D(\zeta)N_{\alpha_{j},j+1}\prod_{\ell=j+1}^{d-1}(2\alpha_{\ell}+\ell-1)^{1/2},\quad  \forall \alpha \in A_k^c.
	\end{align}
	Thereby we find that  \Cref{THMAsympt} is satisfied if 
	
	\begin{align*}
		\underset{k \in E}{\lim}\sum_{\alpha \in A_{k}^c}\frac{   N_{\alpha_{j},j+1}\prod_{\ell=j+1}^{d-1}(2\alpha_{\ell}+\ell-1)^{1/2}}{c_{k}P_{k}^{(d-3)/2,(d-3)/2}(1)} &=0 \\
		\underset{k \in O}{\lim}\sum_{\alpha \in A_{k}^c} \frac{  N_{\alpha_{j},j+1}\prod_{\ell=j+1}^{d-1}(2\alpha_{\ell}+\ell-1)^{1/2}}{c_{k}P_{k}^{(d-3)/2,(d-3)/2}(1)} &=0.
	\end{align*}
	For a precise description we use
	$$N_{k,{j+1}}=\frac{(2k+j-1)(k+j-2)!}{k!(j-1)!}, $$
	$$ c_{k}= \frac{\Gamma(\frac{d-3}{2}+1)(2k+d-2)\Gamma(k+d-2)}{\Gamma(d-1)\Gamma(k+ \frac{d-1}{2})}$$
	and 
	$$P_{k}^{(d-3)/2,(d-3)/2}(1)=\frac{\Gamma((d-3)/2+k+1)}{\Gamma((d-3)/2+1)k!}$$
	where the last equation is taken from \cite{NIST:DLMF}, 18.6.1.
	Combining these definitions and using $N_{\alpha_{j},j+1}\leq N_{k,j+1}$ we can show that 
	\begin{align*}
		\frac{N_{k,j+1}}{c_{k}P_{k}^{(d-3)/2,(d-3)/2}(1)}&=\frac{\Gamma(d-1)(2k+j-1)\Gamma(k+j-1)}{\Gamma(j)(2k+d-2)\Gamma(k+d-2)}\\
		&\leq \frac{\Gamma(d-1)}{\Gamma(j)}3^{d-j-1}\frac{\Gamma(2k+j)}{\Gamma(2k+d-1)}\\
		&= \frac{3^{d-j-1}\Gamma(d-1)}{\Gamma(j)} \prod_{\ell=j+1}^{d-1} (2k+\ell-1)^{-1}. 
	\end{align*}
	Hence, the  conditions of the \Cref{THMAsympt} are satisfied when 
	\begin{align*}
		\underset{k \in E}{\lim}\sum_{\alpha \in A_{k}^c}\prod_{\ell=j+1}^{d-1}\frac{(2\alpha_{\ell}+\ell-1)^{1/2}}{2k+\ell-1} &=0 \\
		\underset{k \in O}{\lim}\sum_{\alpha \in A_{k}^c}\prod_{\ell=j+1}^{d-1}\frac{(2\alpha_{\ell}+\ell-1)^{1/2}}{2k+\ell-1} &=0.
	\end{align*}
	We have  proven that:
	\begin{corollary}Any  continuous kernel 
		\begin{equation*}
			K(\xi,\zeta)=\sum_{k\in E \cup O}^{\infty}\sum_{\alpha\in A_k}d_{\alpha}Y_{\alpha}(\xi)\overline{Y_{\alpha}(\zeta)},\quad d_{\alpha}>0
		\end{equation*}
		where $E\subset 2\Z_+$, $O\subset 2\Z_+ +1$ are infinite sets and  $_{j}\tau_k^{d-1}\subset A_k\subset \tau_{k}^{d-1}$ for a $j\in{1,\ldots,d-1}$  is strictly positive definite when
		\begin{align}
			\underset{k \in O}{\lim}\frac{\vert A_{k}^c\vert}{k^{(d-j-1)/2}}=	\underset{k \in E}{\lim}\frac{\vert A_{k}^c\vert}{k^{(d-j-1)/2}}=0.
		\end{align}
	\end{corollary}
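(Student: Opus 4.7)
The plan is to reduce the claim to the last sufficient condition displayed just before the corollary, namely
\begin{equation*}
\underset{k \in E}{\lim}\sum_{\alpha \in A_{k}^c}\prod_{\ell=j+1}^{d-1}\frac{(2\alpha_{\ell}+\ell-1)^{1/2}}{2k+\ell-1} = 0
\end{equation*}
(and the analogous limit for $k\in O$), which in turn was deduced from \Cref{THMAsympt} together with the pointwise estimate on $|Y_\alpha|$ in \Cref{EstimateSphere}. Thus the task reduces to showing that the hypothesis $|A_k^c|=o(k^{(d-j-1)/2})$ is enough to force these sums to vanish in the limit.

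The key observation is that for every $\alpha\in \tau_k^{d-1}$ the defining condition $|\alpha_1|\leq \alpha_2\leq\cdots\leq \alpha_{d-1}=k$ gives $\alpha_\ell\leq k$ for each $\ell\in\{2,\ldots,d-1\}$. Consequently $(2\alpha_\ell+\ell-1)^{1/2}\leq (2k+\ell-1)^{1/2}$ and therefore
\begin{equation*}
\prod_{\ell=j+1}^{d-1}\frac{(2\alpha_{\ell}+\ell-1)^{1/2}}{2k+\ell-1}\leq \prod_{\ell=j+1}^{d-1}\frac{1}{(2k+\ell-1)^{1/2}}\leq \frac{C_{d,j}}{k^{(d-j-1)/2}},
\end{equation*}
with $C_{d,j}$ an explicit constant depending only on $d$ and $j$. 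This bound is uniform in $\alpha$, so summing over $\alpha\in A_k^c$ gives
\begin{equation*}
\sum_{\alpha \in A_{k}^c}\prod_{\ell=j+1}^{d-1}\frac{(2\alpha_{\ell}+\ell-1)^{1/2}}{2k+\ell-1}\leq C_{d,j}\,\frac{|A_k^c|}{k^{(d-j-1)/2}}.
\end{equation*}

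Under the hypothesis of the corollary, the right-hand side tends to $0$ along $k\in E$ and along $k\in O$, so the sufficient condition for \Cref{THMAsympt} is verified separately on the even and the odd indices. Since both $E$ and $O$ are assumed infinite, \Cref{THMAsympt} then yields strict positive definiteness of $K$. There is no real obstacle here beyond correctly exploiting the monotonicity constraint on the multi-indices $\alpha$; the only step that requires a brief check is that the constant $C_{d,j}$ is indeed $k$-independent, which follows from $(2k+\ell-1)^{1/2}\geq c_{d,j}\,k^{1/2}$ uniformly in $k\geq 1$ for $\ell\in\{j+1,\ldots,d-1\}$.
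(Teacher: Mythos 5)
Your proposal is correct and follows essentially the same route as the paper: the corollary is stated immediately after the displayed sufficient condition, and the only remaining step is exactly the one you supply, namely using $\alpha_\ell\le k$ to bound each factor by $(2k+\ell-1)^{-1/2}$ and hence the product by a constant times $k^{-(d-j-1)/2}$, so the sum is controlled by $|A_k^c|/k^{(d-j-1)/2}$. The paper leaves this last estimate implicit; you have simply written it out.
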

	
	It is obvious that a smaller value of $j$ allows for a larger number of indices to be left out of the set $A_k$. One should note that, on the other hand, the number of fixed indices that need to be included in $A_k$ , $\vert {}_{j}\tau^{d-1}_k\vert$, is larger for small $j$. For example $\vert {}_1\tau^{d-1}_k\vert=\frac{(k+d-3)!}{(d-3)!k!}$, $\vert {}_{d-3}\tau^{d-1}_k\vert=k+1$ and  $\vert {}_{d-2}\tau^{d-1}_k\vert=1$.
	
\section{Acknowledgments}
Jean Carlo Guella was funded by grant 2021/04226-0, S\~ao Paulo Research Foundation (FAPESP) 2021/04226-0. \\ Janin J\"ager was funded by the Deutsche Forschungsgemeinschaft (DFG-German research foundation)-Projektnummer: 461449252 and by the Justus-Liebig University as part of the Just'us-fellowship.	
	\bibliographystyle{tfnlm}
	\bibliography{interactnlmsample}
	%\bibliography{/home/janin/JLUbox/Dokumente/LiteraturALL}
	%\bibliography{literaturall}
	%\bibliography{C:/Users/Janin/JLUbox/Dokumente/LiteraturALL.bib}
\end{document}